\newcommand{\excise}[1]{{}}
\newcommand{\pad}{\rule[-3mm]{0mm}{8mm}}
\theoremstyle{plain}
\newtheorem{theorem}{Theorem}
\numberwithin{theorem}{section}
\newtheorem{corollary}[theorem]{Corollary}
\theoremstyle{definition}
\newtheorem{definition}[theorem]{Definition}
\newtheorem{example}[theorem]{Example}
\theoremstyle{remark}
\newtheorem{remark}[theorem]{Remark}
\newtheorem{claim}{Claim}
\newcommand{\cc}{\mathbf{c}}
\newcommand{\onev}{\mathbf{1}}
\newcommand{\zerov}{\mathbf{0}}
\newcommand{\bd}{\partial^{\phantom{*}}} %% use this form in equations like L=\bd\cbd (to get the subscripts aligned)
\newcommand{\bdo}{\partial} %% use this form when applying to a chain (to avoid extraneous spaces)
\newcommand{\rbd}{\tilde\partial^{\phantom{*}}} %% reduced boundary
\newcommand{\rcbd}{\tilde\partial^{*}} %% reduced boundary
\newcommand{\cbd}{\partial^*}
\newcommand{\Ltot}{L^{\rm tot}} %% Laplacians
\newcommand{\Lud}{L}
\newcommand{\Ldu}{L^{\rm du}}
\DeclareMathOperator{\coker}{coker}
\DeclareMathOperator{\im}{im}
\DeclareMathOperator{\rank}{rank}
\newcommand{\HH}{\tilde H} %% for reduced homology
\newcommand{\Nn}{\mathbb{N}}
\newcommand{\Qq}{\mathbb{Q}}
\newcommand{\Zz}{\mathbb{Z}}
\newcommand{\st}{\colon}
\newcommand{\x}{\times}
\newcommand{\isom}{\cong}
\newcommand{\sm}{\backslash}
\newcommand{\SST}{{\mathcal T}}
\newcommand{\rL}{\tilde{L}}
\author{Art M.\ Duval}
\address{Department of Mathematical Sciences \\ University of Texas at El Paso}
\author{Caroline J.\ Klivans}
\address{Departments of Mathematics and Computer Science \\ The University of Chicago}
\author{Jeremy L.\ Martin}
\address{Department of Mathematics \\ University of Kansas}
\title{Critical Groups of Simplicial Complexes}
\keywords{graph, simplicial complex, critical group, combinatorial Laplacian, chip-firing game, sandpile model, spanning trees}
\subjclass[2010]{Primary 05E45; Secondary 05C50, 05C21, 55U15, 57M15}
\date{February 25, 2011}
\begin{document}

\begin{abstract}
We generalize the theory of critical groups from graphs to simplicial
complexes.  Specifically, given a simplicial complex,
we define a family of 
abelian groups 
in terms of combinatorial Laplacian operators, generalizing
the construction of the critical group of a graph.
We show how to realize these critical groups explicitly as
cokernels of reduced Laplacians, and prove that
they are finite, with orders given by
weighted enumerators of simplicial spanning trees.
We describe how the critical groups of a complex represent flow along
its faces, and sketch another potential interpretation
as analogues of Chow groups.
\end{abstract}

\maketitle
\section{Introduction}

Let $G$ be a finite, simple, undirected, connected graph.  The
\emph{critical group} of $G$ is a finite abelian group $K(G)$ whose
cardinality is the number of spanning trees of $G$.  The critical
group is an interesting graph invariant in its own right, and it
also arises naturally in the theory of a discrete dynamical system with
many essentially equivalent formulations --- the \emph{chip-firing game},
\emph{dollar game}, \emph{abelian sandpile model}, etc.--- that
has been discovered independently in contexts
including statistical physics, arithmetic geometry, and combinatorics.
There is an extensive literature on these models and their behavior:
see, e.g., \cite{Biggs,BLS,Dhar,GodRoy,LP}.  In all guises, the model
describes a certain type of discrete flow along the edges of $G$. The
elements of the critical group correspond to
states in the flow model that are stable, but for which a small
perturbation causes an instability.

The purpose of this paper is to extend the theory of the critical
group from graphs to simplicial complexes.  For a finite simplicial complex
$\Delta$ of dimension~$d$, we define its higher critical groups as
$$K_i(\Delta) := \ker \bd_i / \im(\bd_{i+1}\cbd_{i+1})$$
for $0\leq i\leq d-1$; here $\bd_j$ means the simplicial boundary map mapping $j$-chains
to $(j-1)$-chains.  The map $\bd_{i+1}\cbd_{i+1}$ is called an \emph{(updown) combinatorial
Laplacian operator}.  
For $i=0$, our definition coincides with the standard definition
of the critical group of the 1-skeleton of $\Delta$.
Our main result
(Theorem~\ref{main-thm}) states that, under certain mild assumptions
on the complex $\Delta$, the group $K_i(\Delta)$ is in fact isomorphic to the cokernel
of a reduced version of the Laplacian.  It follows from a simplicial analogue of the matrix-tree theorem
\cite{DKM,DKM2} that the orders $|K_i(\Delta)|$ of the higher critical
groups are given by a torsion-weighted enumeration of
higher-dimensional spanning trees (Corollary~\ref{count-corollary})
and in terms of the eigenvalues of the Laplacian operators
(Corollary~\ref{alt-product}).  In the case of a
simplicial sphere, we prove (Theorem~\ref{spheres})
that the top-dimensional critical
group is cyclic, with order equal to the number of
facets, generalizing the corresponding statement
\cite{Lor1,Lor2,Merris} for cycle graphs.
In the case that $\Delta$ is a skeleton of an $n$-vertex simplex,
the critical groups are direct sums of copies of $\Zz/n\Zz$;
as we discuss in Remark~\ref{Molly},
this follows from an observation of Maxwell \cite{Maxwell}
together with our main result.
We also give a model of
discrete flow (Section~\ref{sec:model}) on the codimension-one faces
along facets of the complex whose behavior is captured by the group
structure. Finally, we outline (Section~\ref{sec:intersection}) an
alternative interpretation of the higher critical groups as discrete
analogues of the Chow groups of an algebraic variety.

The authors thank Andy Berget, Hailong Dao, Craig Huneke, Manoj Kummini,
Gregg Musiker, Igor Pak, Vic Reiner, and Ken Smith for numerous helpful discussions.

\section{Critical Groups of Graphs}
\label{sec:graphs}

\subsection{The chip-firing game}
We summarize the chip-firing game on a
graph, omitting the proofs.  For 
more details,
see, e.g., Biggs~\cite{Biggs}.

Let $G=(V,E)$ be a finite, simple\footnote{The chip-firing game and our ensuing results can easily be extended to
allow parallel edges; we assume that $G$ is simple for the sake of ease of exposition.}, connected, undirected graph, with
$V=[n] \cup q =\{1,2,\dots,n,q\}$ and $E=\{e_1,\dots,e_m\}$. The special vertex $q$ is called the \emph{bank}
(or ``root'' or ``government'').  Let $d_i$ be the
degree of vertex~$i$, i.e. the number of adjacent vertices.
The chip-firing game is a discrete dynamical system whose state is described by a \emph{configuration} vector~$\cc=(c_1,\dots,c_n)\in\Nn^n$.
Each $c_i$ is a nonnegative integer that we think of as the number of ``chips'' belonging to
vertex~$i$.  (Note that the number $c_q$ of chips belonging to the bank~$q$ is not part of the data of a configuration.)

Each non-root vertex is generous (it likes to donate
chips to its neighbors), egalitarian (it likes all
its neighbors equally), and prudent (it does not want to go into debt).
Specifically, a vertex~$v_i$ is called \emph{ready} in a configuration $\cc$ if $c_i\geq
d_i$.  If a vertex is ready, it can \emph{fire} by giving one chip to each
of its neighbors.  Unlike the other vertices, the bank is a miser.  As long as other vertices
are firing, the bank does not fire, but just collects chips.

  As more
and more chips accumulate at the bank, the game eventually reaches a configuration
in which no non-bank vertex can fire.  Such a
configuration is called \emph{stable}.  At this point, the bank
finally fires, giving one chip to each of its neighbors.  Unlike the
other vertices, the bank is allowed to go into debt: that is, we do not require
that $c_q\geq d_q$ for the bank to be able to fire.

Denote by $\cc(x_1,\dots,x_r)$ the configuration obtained from
$\cc$ by firing the vertices $x_1,\dots,x_r$ in order.
This sequence (which may contain repetitions) is called a
\emph{firing sequence} for $\cc$ if every firing is permissible:
that is, for each $i$, either $x_i\neq q$ is ready to fire in the configuration $\cc(x_1,\dots,x_{i-1})$,
or else $x_i=q$ and $\cc(x_1,\dots,x_{i-1})$ is stable.  A
configuration $\cc$ is called \emph{recurrent} if there
is a nontrivial firing sequence $X$ such that $\cc(X)=\cc$.

A configuration is called \emph{critical} if it is both stable and recurrent.  For every
starting configuration $\cc$, there is a uniquely determined critical configuration
$[\cc]$ that can be reached from~$\cc$ by some firing sequence \cite[Thm.~3.8]{Biggs}.  The \emph{critical
group} $K(G)$ is defined as the set of these critical configurations, with group law given by
  $[\cc]+[\cc']=[\cc+\cc']$,
where the right-hand addition is componentwise addition of vectors.

The \emph{abelian sandpile model} was first introduced in \cite{Dhar} as an illustration of ``self-organized criticality'';
an excellent recent exposition is \cite{LP}.
Here, grains of sand (analogous to chips) are piled at each
vertex, and an additional grain of sand is added to a (typically
randomly chosen) pile.  If the pile reaches some predetermined
size (for instance, the degree of that vertex), then it \emph{topples}
by giving one grain of sand to each of its neighbors, which can
then topple in turn, and so on.  This sequence of topplings is called an \emph{avalanche}
and the associated operator on states of the system is called an
\emph{avalanche operator}.  (One can show that the avalanche operator
does not depend on the order in which vertices topple; this
is the reason for the use of the term ``abelian''.)  The sandpile model
itself is the random walk on the stable configurations, and the
critical group is the group generated by the avalanche operators.

The critical group can also be viewed as a discrete
analogue of the Picard group of an algebraic curve.  This point of
view goes back at least as far as the work of Lorenzini~\cite{Lor1,Lor2}
and was developed, using the language of divisors, by Bacher,
de~la~Harpe, and Nagnibeda \cite{BHN} (who noted that their ``setting has a straightforward generalization to higher dimensional objects'').  It appears in diverse combinatorial contexts including elliptic
curves over finite fields (Musiker~\cite{Musiker}), linear systems on
tropical curves (Haase, Musker and Yu~\cite{HMY}), and
Riemann-Roch theory for graphs (Baker and Norine~\cite{BN}).

\subsection{The algebraic viewpoint}
The critical group can be defined algebraically in terms of the Laplacian matrix.
\begin{definition}
Let $G$ be a finite, simple, connected, undirected graph with vertices $\{1,\dots,n,q\}$.
The {Laplacian matrix} of $G$ is the symmetric matrix $L$ (or, equivalently, linear self-adjoint operator)
whose rows and columns
are indexed by the vertices of $G$, with entries
  \begin{displaymath}
    \ell_{ij} = \begin{cases}
    d_i &\text{ if } i=j,\\
    -1  &\text{ if } ij\in E,\\
     0  &\text{ otherwise.}
  \end{cases}
  \end{displaymath}
\end{definition}

Firing vertex~$i$ in the chip-firing game is equivalent to subtracting the $i^{th}$ column of the Laplacian (ignoring the entry indexed by~$q$) from the configuration vector~$\cc$.
Equivalently, if $\cc'=\cc(x_1,\dots,x_r)$, then the configurations $\cc$ and $\cc'$
represent the same element of the cokernel
of the Laplacian (that is, the quotient of $\Zz^{n+1}$ by the column space of~$L$).

It is immediate from the definition of $L$ that
$L(\onev)=\zerov$, where $\onev$ and $\zerov$ denote the all-ones and all-zeros vectors in $\Nn^{n+1}$.  Moreover,
it is not difficult to show that $\rank L=|V|-1=n$.
In terms of homological algebra, we have a chain complex
  \begin{equation} \label{graph-chain}
  \Zz^{n+1} \xrightarrow{L} \Zz^{n+1} \xrightarrow{S} \Zz \to 0
  \end{equation}
where $S(\cc) = \cc\cdot\onev = c_q+c_1+\cdots+c_n$.  The equation $L(\onev)=\zerov$
says that $\ker(S)\supseteq\im(L)$.  Moreover, $\rank L=n=\rank\ker S$,
so the abelian group $\ker(S) / \im(L)$ is finite.

\begin{definition}
The \emph{critical group} of a graph $G$ is $K(G) = \ker(S)/\im(L)$.
\end{definition}

This definition of the critical group is equivalent to that in
terms of the chip-firing game \cite[Thm.~4.2]{Biggs}.  
The order of the critical group is the determinant of the
\emph{reduced Laplacian} formed by removing the row and column indexed by~$q$
\cite[Thm.~6.2]{Biggs}.
By the matrix-tree theorem, this is the number of spanning trees.  As we will see,
the algebraic description provides a natural framework for generalizing
the critical group.

\section{The Critical Groups of a Simplicial Complex}
We assume familiarity with the basic algebraic topology of simplicial complexes; see, e.g., Hatcher~\cite{Hatcher}.
Let $\Delta$ be a $d$-dimensional simplicial complex.  For $-1\leq i\leq d$, let $C_i(\Delta;\Zz)$ be the $i^{th}$
simplicial chain group of $\Delta$.  We denote the
simplicial boundary and coboundary maps respectively by
  \begin{align*}
  \bd_{\Delta,i}  &\;:\; C_i(\Delta;\Zz) \to C_{i-1}(\Delta;\Zz),\\
  \cbd_{\Delta,i} &\;:\; C_{i-1}(\Delta;\Zz) \to C_i(\Delta;\Zz),
  \end{align*}
where we have identified cochains with chains via the natural
inner product.  We will abbreviate the subscripts in
the notation for boundaries and coboundaries whenever no ambiguity can arise.

Let $-1\leq i\leq d$.  The \emph{$i$-dimensional combinatorial Laplacian}\footnote{\label{L-notation-note}%
In other settings, our Laplacian might be referred to as the ``up-down'' Laplacian, $L^{\rm ud}$.  The $i^{th}$ \emph{down-up Laplacian} is $\Ldu_i=\cbd_i \bd_i$,
and the $i^{th}$ \emph{total Laplacian} is $\Ltot_i = \Lud_i+\Ldu_i$.
We adopt the notation we do since, except for
one application (Remark~\ref{Molly} below), we only need the up-down Laplacian.
}
of $\Delta$
is the operator
\begin{displaymath}
\Lud_{\Delta,i}  = \bd_{i+1}\cbd_{i+1}\st C_i(\Delta;\Zz)\to C_i(\Delta;\Zz).
\end{displaymath}
Combinatorial Laplacian operators seem to have first appeared in the
work of Eckmann~\cite{Eckmann} on finite dimensional Hodge theory. As the name
suggests, they are discrete versions of the Laplacian operators on differential
forms on a Riemannian manifold. In fact, Dodziuk and Patodi~\cite{DP}
showed that for suitably nice
triangulations of a manifold, the eigenvalues of the discrete Laplacian converge in an appropriate sense to those
of the usual continuous Laplacian.
For one-dimensional complexes, i.e., graphs, the combinatorial Laplacian is
just the usual Laplacian matrix $L=D-A$, where $D$ is the diagonal
matrix of vertex degrees and $A$ is the (symmetric) adjacency matrix.

In analogy to the chain complex of \eqref{graph-chain}, we have the chain complex
\begin{displaymath}
C_i(\Delta;\Zz) \xrightarrow{L} C_i(\Delta;\Zz) \xrightarrow{\bd_i} C_{i-1}(\Delta;\Zz),
\end{displaymath}
where $L = \Lud_{\Delta,i}$.  (This is a chain complex because $\bdo_iL=\bd_i\bd_{i+1}\cbd_{i+1}=0$.)
We are now ready to make our main definition.

\begin{definition}
The $i$-dimensional critical group of $\Delta$ is 
\begin{displaymath}
K_i(\Delta) := \ker \bd_i / \im L = \ker \bd_i / \im(\bd_{i+1}\cbd_{i+1}).
\end{displaymath}
\end{definition}
Note that $K_0(\Delta)$ is precisely the critical group of the 1-skeleton of $\Delta$.

\subsection{Simplicial spanning trees}

Our results about critical groups rely on the theory of simplicial and
cellular spanning trees developed in~\cite{DKM}, based on earlier work
of Bolker~\cite{Bolker} and Kalai~\cite{Kalai}.  Here we briefly
review the definitions and basic properties, including the
higher-dimensional analogues of Kirchhoff's matrix-tree theorem.  For
simplicity, we present the theory for simplicial complexes, the case
of primary interest in combinatorics.  Nevertheless, the definitions
of spanning trees, their enumeration using a generalized matrix-tree
theorem, and the definition and main result about critical groups are
all valid in the more general setting of regular CW-complexes~\cite{DKM2}.

In order to define simplicial spanning trees, we first fix
some notation concerning simplicial complexes and algebraic topology.
The symbol~$\Delta_i$ will denote the set of cells of dimension~$i$.
The \emph{$i$-dimensional skeleton}~$\Delta_{(i)}$ of a simplicial complex~$\Delta$
is the subcomplex consisting of all cells of dimension~$\leq i$.  A complex is \emph{pure} if all maximal cells
have the same dimension.  The $i^{th}$ reduced homology group of~$\Delta$ with coefficients in a ring $R$
is denoted $\HH_i(\Delta;R)$.  The \emph{Betti numbers} of $\Delta$ are $\beta_i(\Delta)=\dim_\Qq\HH_i(\Delta;\Qq)$.
The \emph{$f$-vector} is $f(\Delta)=(f_{-1}(\Delta),f_0(\Delta),\dots)$, where $f_i(\Delta)$ is the number of
faces of dimension~$i$.

\begin{definition} \label{SST}
Let $\Delta$ be a pure $d$-dimensional simplicial complex, and let $\Upsilon\subseteq\Delta$ be a subcomplex
such that $\Upsilon_{(d-1)}=\Delta_{(d-1)}$.  We say that
$\Upsilon$ is a \emph{(simplicial) spanning tree} of $\Delta$ if the
following three conditions hold:
\begin{enumerate}
\item $\HH_d(\Upsilon;\Zz) = 0$;
\item $ \HH_{d-1}(\Upsilon;\Qq)=0$ (equivalently,  $|\HH_{d-1}(\Upsilon;\Zz)| < \infty$);
\item $f_d(\Upsilon) = f_d(\Delta)-\beta_d(\Delta)+\beta_{d-1}(\Delta)$.
\end{enumerate}
More generally, an \emph{$i$-dimensional spanning tree} of $\Delta$ is a spanning tree of the $i$-dimensional
skeleton of $\Delta$.
\end{definition}

In the case $d=1$ (that is, $\Delta$ is a graph), we recover the usual
definition of a spanning tree: the three conditions above say
respectively that~$\Upsilon$ is acyclic, connected, and has one more
vertex than edge.  Meanwhile, the 0-dimensional spanning trees
of~$\Delta$ are its vertices (more precisely, the subcomplexes
of~$\Delta$ with a single vertex), which are precisely the connected,
acyclic subcomplexes of~$\Delta_{(0)}$.

Just as in the graphical case, any two of the conditions of Definition~\ref{SST}
imply the third~\cite[Prop~3.5]{DKM}.
In order for~$\Delta$ to have a $d$-dimensional spanning tree, it is necessary and sufficient
that $\HH_i(\Delta;\Qq)=0$ for all $i<d$; such a complex is called \emph{acyclic in positive codimension},
or APC.  Note that a graph is APC if and only if it is connected.

\begin{example}\label{bipyr-trees}
Consider the \emph{equatorial bipyramid}: the two-dimensional simplicial
complex $B$ with vertices $[5]$ and facets $123, 124, 125, 134, 135,
234, 235$. A geometric realization of~$B$ is shown in
Figure~\ref{bipyramid-figure}.  A 2-SST of $B$ can be constructed by
removing two facets $F, F'$, provided that $F \cap F'$ contains
neither of the vertices $4, 5$. A simple count shows that there are 15
such pairs $F,F'$, so $B$ has 15 two-dimensional spanning trees.
\end{example}

\begin{figure}[ht] % dimensions: 7 x 9
\begin{center}
\resizebox{1.05in}{1.35in}{\includegraphics{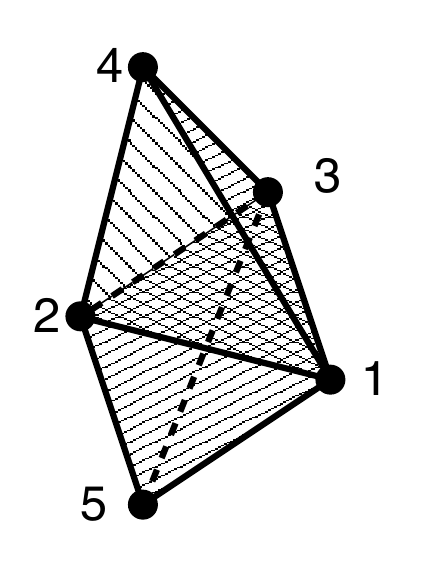}}
\end{center}
\caption{The equatorial bipyramid $B$.\label{bipyramid-figure}}
\end{figure}

A phenomenon arising only in dimension $d>1$ is that
spanning trees may have torsion: that is, $\HH_{d-1}(\Upsilon;\Zz)$ can be finite but nontrivial.  For example,
the 2-dimensional skeleton of a 6-vertex simplex has (several) spanning trees $\Upsilon$ that are homeomorphic
to the real projective plane, and in particular have $\HH_1(\Upsilon;\Zz)\isom\Zz/2\Zz$.  This cannot happen
in dimension~1 (i.e., for graphs), in which every spanning tree is a contractible topological space.
This torsion directly affects tree enumeration in higher dimension; see Section~\ref{order-section}.

\subsection{The main theorem}
Our main result gives an explicit form for the critical group $K_i(\Delta)$ in terms of a reduced Laplacian
matrix.  This reduced form is both more convenient for
computing examples, and gives a direct connection with the simplicial and cellular generalizations
of the matrix-tree theorem \cite{DKM,DKM2}.  For a general reference on the homological algebra
we will need, see, e.g., Lang \cite{Lang}.

Let~$\Delta$ be a pure, $d$-dimensional, APC simplicial complex, and fix $i<d$.
Let~$\Upsilon$ be an $i$-dimensional spanning tree of~$\Delta_{(i)}$,
and let $\Theta=\Delta_i \sm \Upsilon$ (the set of $i$-dimensional faces
of~$\Delta$ \emph{not} in~$\Upsilon$).  Let $\tilde L$
denote the reduced Laplacian obtained from~$L$ by removing the rows
and columns corresponding to~$\Upsilon$
(equivalently, by restricting~$L$ to the rows and columns corresponding to~$\Theta$).  

\begin{theorem}
\label{main-thm}
Suppose that $\HH_{i-1}(\Upsilon;\Zz)=0$.  Then
\begin{displaymath}
K_i(\Delta) \isom \Zz^\Theta / \im \tilde L.
\end{displaymath}
\end{theorem}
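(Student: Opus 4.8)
The plan is to view both $K_i(\Delta)=\ker\bd_i/\im L$ and $\Zz^\Theta/\im\tilde L$ as quotients sitting inside $C_i(\Delta;\Zz)$ and to exhibit a single projection carrying one presentation onto the other. Partition the $i$-faces as $\Delta_i=\Upsilon\dju\Theta$, write $C_i(\Delta;\Zz)=\Zz^\Upsilon\dsum\Zz^\Theta$ accordingly, and let $p\st C_i(\Delta;\Zz)\to\Zz^\Theta$ be the coordinate projection that forgets the $\Upsilon$-coordinates, with $\iota_\Theta$ the complementary inclusion. Since $\bd_iL=\bd_i\bd_{i+1}\cbd_{i+1}=0$ we have $\im L\subseteq\ker\bd_i$, so both numerator and denominator of $K_i(\Delta)$ live inside $C_i(\Delta;\Zz)$, and everything will be compared through $p$.

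First I would show that $p$ restricts to an \emph{isomorphism} $\ker\bd_i\to\Zz^\Theta$. Injectivity is the tree condition $\HH_i(\Upsilon;\Zz)=0$: a cycle supported on $\Upsilon$ lies in $Z_i(\Upsilon)$, which equals $\HH_i(\Upsilon;\Zz)$ because $\Upsilon$ has no $(i+1)$-faces, hence is $0$. Surjectivity is where the hypothesis $\HH_{i-1}(\Upsilon;\Zz)=0$ enters, and I expect this to be the main obstacle. Given $w\in\Zz^\Theta$ I must produce $z_\Upsilon\in\Zz^\Upsilon$ with $\bd_i(\iota_\Theta w+z_\Upsilon)=0$; since $\Upsilon_{(i-1)}=\Delta_{(i-1)}$, the chain $\bd_i\iota_\Theta w$ already lies in $Z_{i-1}(\Upsilon)$, and solving for $z_\Upsilon$ is exactly the assertion $\bd_i\iota_\Theta w\in B_{i-1}(\Upsilon)$, i.e. $Z_{i-1}(\Upsilon)=B_{i-1}(\Upsilon)$, which is $\HH_{i-1}(\Upsilon;\Zz)=0$. (Condition (2) of Definition~\ref{SST} gives only that $p$ has finite cokernel, i.e. an isomorphism over $\Qq$; the integral hypothesis is precisely what upgrades this to surjectivity over $\Zz$, and without it the coefficients produced below would fail to be integral.) Because $p$ is now an isomorphism of free $\Zz$-modules, its inverse $s=p^{-1}\st\Zz^\Theta\to\ker\bd_i$ is $\Zz$-linear; for each $\upsilon\in\Upsilon$ let $a_\upsilon\in\Zz^\Theta$ be the integer vector expressing the $\upsilon$-coordinate of $s$, so that $z_\upsilon=\langle a_\upsilon,p(z)\rangle$ for every $z\in\ker\bd_i$.

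Since $p\st\ker\bd_i\to\Zz^\Theta$ is an isomorphism it descends to $K_i(\Delta)\isom\Zz^\Theta/p(\im L)$, so it remains only to prove $p(\im L)=\im\tilde L$. Writing $L$ in block form relative to $\Zz^\Upsilon\dsum\Zz^\Theta$, with $\Theta\Theta$-block $\tilde L$ and $\Upsilon$-to-$\Theta$ block $L_{\Theta\Upsilon}$, one has $p(Lx)=\tilde L\,x_\Theta+L_{\Theta\Upsilon}\,x_\Upsilon$; thus $\im\tilde L\subseteq p(\im L)$ is immediate and the real content is $\im L_{\Theta\Upsilon}\subseteq\im\tilde L$. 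For a basis vector $e_\upsilon$ with $\upsilon\in\Upsilon$, I would test $e_\upsilon-\iota_\Theta a_\upsilon$ against cycles: for every $z\in\ker\bd_i$, $\langle e_\upsilon-\iota_\Theta a_\upsilon,z\rangle=z_\upsilon-\langle a_\upsilon,p(z)\rangle=0$, so $e_\upsilon-\iota_\Theta a_\upsilon\in(\ker\bd_i)^\perp\subseteq(\im\bd_{i+1})^\perp=\ker\cbd_{i+1}$, the inclusion because $\im\bd_{i+1}\subseteq\ker\bd_i$ and the final equality because $\cbd_{i+1}$ is the transpose of $\bd_{i+1}$. Applying $L=\bd_{i+1}\cbd_{i+1}$ annihilates this element, giving $Le_\upsilon=L\iota_\Theta a_\upsilon$ and hence $L_{\Theta\Upsilon}e_\upsilon=(L\iota_\Theta a_\upsilon)_\Theta=\tilde L a_\upsilon\in\im\tilde L$. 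Letting $\upsilon$ range over $\Upsilon$ gives $\im L_{\Theta\Upsilon}\subseteq\im\tilde L$, hence $p(\im L)=\im\tilde L$ and the theorem. The only delicate point is the integrality in the surjectivity of $p$; once $s$ is known to be $\Zz$-linear, everything after is formal, resting on the orthogonality $\ker\cbd_{i+1}=(\im\bd_{i+1})^\perp$ and the self-adjointness packaged in $L=\bd_{i+1}\cbd_{i+1}$.
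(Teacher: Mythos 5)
Your proof is correct, and its overall skeleton matches the paper's: both arguments reduce to the coordinate projection $p$ that forgets the $\Upsilon$-coordinates, together with the two facts (a) $p$ restricts to an isomorphism $\ker\bd_i\to\Zz^\Theta$ and (b) $p(\im L)=\im\tilde L$. Your part (a) is the paper's Claim~\ref{claim-four} in different clothing (the basis $\{\hat\theta\}$ there is exactly $\{p^{-1}(e_\theta)\}$ here), and your surjectivity argument --- $\bd_i\iota_\Theta w$ is an $(i-1)$-cycle of $\Upsilon$, hence a boundary because $\HH_{i-1}(\Upsilon;\Zz)=0$ --- uses the hypothesis exactly as the paper's Claim~\ref{claim-one} does; the paper's snake-lemma packaging versus your direct transport of the quotient is cosmetic. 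Where you genuinely diverge is in part (b), i.e.\ in showing the $\Upsilon$-columns of $L$ are redundant, $Le_\upsilon\in L(C_i(\Theta;\Zz))$, which is the paper's Claim~\ref{claim-three}. The paper reaches this by first proving that $\cbd_{\Upsilon,i}$ is surjective, which costs real machinery (Claim~\ref{subclaim}: $\coker\bd_{\Upsilon,i}$ is free, via projective modules; Claim~\ref{claim-two}: Smith normal form), and then writing the correction term as an integral coboundary $\cbd_{\Delta,i}(\eta)$, killed by $L$ since $\cbd\cbd=0$. You instead take the correction term $e_\upsilon-\iota_\Theta a_\upsilon$ handed to you by $p^{-1}$ and argue by duality: it is orthogonal to $\ker\bd_i\supseteq\im\bd_{i+1}$, hence lies in $\ker\cbd_{i+1}$ by adjointness, hence is annihilated by $L=\bd_{i+1}\cbd_{i+1}$. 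This is weaker than producing an integral coboundary --- your element of $(\ker\bd_i)^\perp$ need not lie in $\im\cbd_{\Delta,i}$ --- but membership in $\ker\cbd_{i+1}$ is all $L$ cares about, so you skip the projective-module and Smith-normal-form steps entirely. What the paper's longer route buys is a stronger structural fact about torsion-free trees (surjectivity of the restricted coboundary), of independent interest; what your route buys is economy, and it correctly isolates where integrality enters: $\HH_{i-1}(\Upsilon;\Zz)=0$, rather than mere finiteness, is needed precisely so that $p^{-1}$, and hence the integer vectors $a_\upsilon$, exist over $\Zz$.
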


\begin{proof} 
We will construct a commutative diagram
\begin{equation}
\label{snake}
\xymatrix{
0 \ar[r] & \im L \ar[r]\ar[d]_f & \ker\bdo_{\Delta,i}\ar[r]\ar[d]_g & K_i(\Delta) \ar[d]_h\ar[r] & 0\\
0 \ar[r] & \im\rL \ar[r] & \Zz^\Theta\ar[r] & \Zz^\Theta / \im \tilde L\ar[r] & 0
}
\end{equation}
where the rows are short exact sequences with the natural inclusions and
quotient maps.
The map~$f$ is defined by $f(L\theta)=\rL\theta$ for all $\theta\in\Theta$ (which we will show is an isomorphism in Claim \ref{claim-three}), and
the map~$g$ is defined by $g(\hat\theta)=\theta$ (which we will show is an isomorphism in Claim \ref{claim-four}).
In Claim~\ref{claim-five}, we will show that
$f(\gamma)=g(\gamma)$ for all $\gamma\in\im L$, so 
the left-hand square commutes.  Having proven these facts, the map~$h$ is well-defined by
a diagram-chase, and it is an isomorphism by the snake lemma.
We organize the proof into a series of claims.

\begin{claim}\label{claim-one}
$\im\bd_{\Delta,i}=\im\bd_{\Upsilon,i}$ as $\Zz$-modules.
\end{claim}

Indeed, we have
$\im\bd_{\Upsilon,i}\subseteq\im\bd_{\Delta,i}\subseteq\ker\bd_{\Delta,i-1}=\ker\bd_{\Upsilon,i-1}$
(the last equality because $\Upsilon_{(i-1)}=\Delta_{(i-1)}$),
so there is a short exact sequence of $\Zz$-modules
$$
0 \to
\frac{\im\bd_{\Delta,i}}{\im\bd_{\Upsilon,i}} \to
\frac{\ker\bd_{\Upsilon,i-1}}{\im\bd_{\Upsilon,i}} \to
\frac{\ker\bd_{\Delta,i-1}}{\im\bd_{\Delta,i}} \to 0.
$$
Since $\Upsilon$ is a torsion-free spanning tree,
the middle term $\HH_{i-1}(\Upsilon;\Zz)$ is zero.
Therefore, the first term is zero
as well, proving Claim~\ref{claim-one}.

\begin{claim}
\label{subclaim}
$\coker\bd_{\Upsilon,i}$ is a free $\Zz$-module.
\end{claim}

We will use some of the basic theory of projective modules \cite[pp.~137--139]{Lang}.
The image of $\bd_{\Upsilon,i}$ is a submodule of $C_{i-1}(\Upsilon;\Zz)$, so it is free,
hence projective.  Therefore, the short exact sequence
$$0 \to \ker\bd_{\Upsilon,i} \to C_i(\Upsilon;\Zz) \xrightarrow{\bd_{\Upsilon,i}} \im \bd_{\Upsilon,i} \to 0$$
is split: that is, $C_{i}(\Upsilon;\Zz) = \ker\bd_{\Upsilon,i} \oplus F$,
where $F$ is a free $\Zz$-module.  On the other hand,
$\im\bd_{\Upsilon,i}\subseteq\ker\bd_{\Upsilon,i-1}$, so
$$
\coker \bd_{\Upsilon,i}
= \frac{C_{i-1}(\Upsilon;\Zz)}{\im \bd_{\Upsilon,i}}
= \frac{\ker\bd_{\Upsilon,i-1}\oplus F}{\im \bd_{\Upsilon,i}}
= \frac{\ker\bd_{\Upsilon,i-1}}{\im \bd_{\Upsilon,i}} \oplus F
= \HH_{i-1}(\Upsilon;\Zz) \oplus F$$
and $\HH_{i-1}(\Upsilon;\Zz)=0$ by hypothesis, proving Claim~\ref{subclaim}.

\begin{claim}\label{claim-two}
The coboundary map $\cbd_{\Upsilon,i}\st C_{i-1}(\Upsilon;\Zz) \to C_i(\Upsilon;\Zz)$ is surjective.
\end{claim}
By the basic theory of finitely generated abelian groups, we may write
\begin{equation} \label{not-smith}
\bd_{\Upsilon,i}
= P \left[\begin{array}{c|c} D&0\\ \hline 0&0\end{array}\right]Q
\end{equation}
where $P\in GL_{f_{i-1}}(\Zz)$,  $Q\in GL_{f_i}(\Zz)$, and $D$ is a diagonal matrix whose entries are the cyclic
summands of the torsion submodule of $\coker\bd_{\Upsilon,i}$.  In fact, these entries are all~1 by
Claim~\ref{subclaim}.  Moreover, the columns of $\bd_{\Upsilon,i}$ are linearly independent over~$\Qq$
and over~$\Zz$ because $\Upsilon$ is a simplicial tree, so in fact there are no zero columns in \eqref{not-smith}.
Therefore
$$\bd_{\Upsilon,i}
= P \left[\begin{array}{c} I\\ \hline 0\end{array}\right] Q
= P \left[\begin{array}{c} Q\\ \hline 0\end{array}\right]$$
and transposing yields
$$\cbd_{\Upsilon,i}
= \left[\begin{array}{c|c} Q^T & 0\end{array}\right] P^T$$
and so 
\begin{align*}
\im\cbd_{\Upsilon,i}&=\im\left[\begin{array}{c|c} Q^T & 0\end{array}\right]
&& \text{(because $P$, hence $P^T$, is invertible over $\Zz$)}\\
&=C_i(\Upsilon;\Zz)
&& \text{(because $Q$, hence $Q^T$, is invertible over $\Zz$)}.
\end{align*}
We have proved Claim~\ref{claim-two}.

\begin{claim}\label{claim-three}
$L(C_i(\Delta;\Zz)) = L(C_i(\Theta;\Zz))$.
\end{claim}

Choose an arbitrary chain $\gamma\in C_i(\Upsilon;\Zz)$.  By Claim~\ref{claim-two}, there is a chain $\eta\in C_{i-1}(\Upsilon;\Zz)$
such that $\cbd_{\Upsilon,i}(\eta) = \gamma$.  On the other hand,
$\cbd_{\Delta,i}(\eta) = \cbd_{\Upsilon,i}(\eta) - \theta$ for some chain~$\theta\in C_i(\Theta;\Zz)$.  Hence
\begin{displaymath}
L(\gamma)-L(\theta)
~=~ L(\gamma-\theta)
~=~ L(\cbd_{\Upsilon,i}(\eta)-\theta)
~=~ L\cbd_{\Delta,i}(\eta)
~=~ \bd_{i+1} \cbd_{i+1} \cbd_i(\eta)
~=~ 0
\end{displaymath}
and so $L(\gamma) = L(\theta)$.  In particular, $L(\gamma)\in L(C_i(\Theta;\Zz))$,
which proves Claim~\ref{claim-three}.

\medskip

Observe that
$$L ~=~ \left[\begin{array}{c|c}
L(C_i(\Upsilon;\Zz)) & L(C_i(\Theta;\Zz))
\end{array}\right]
~=~ \left[\begin{array}{c|c}
L(C_i(\Upsilon;\Zz)) & \begin{array}{c} *\pad\\\hline\pad\rL \end{array}
\end{array}\right].
$$
Thus Claim~\ref{claim-three} says that the $\Theta$-columns span the full column space of~$L$.
Since $L$ is a symmetric matrix, this statement remains true if we replace ``column''
with ``row''.  In particular, there is an isomorphism $f\st L(C_i(\Theta;\Zz))\to\rL(C_i(\Theta;\Zz))$
given by deleting the $\Upsilon$-rows; that is, $f(L\theta)=\rL\theta$.

By Claim~\ref{claim-one}, for each chain $\theta\in C_i(\Theta;\Zz)$, we can write
$$\bdo_{\Delta,i}(\theta)=\sum_{\sigma\in\Upsilon_i}c^{\phantom{*}}_{\sigma\theta}\bdo_{\Delta,i}(\sigma)$$
with $c^{\phantom{*}}_{\sigma\theta}\in\Zz$.  Therefore, the chain
$$\hat \theta=\theta-\sum_{\sigma\in\Upsilon_i}c^{\phantom{*}}_{\sigma\theta}\sigma$$
lies in $X:=\ker\bd_{\Delta,i}$.

\begin{claim}\label{claim-four}
The set $\{\hat\theta\st\theta\in\Theta\}$ is a $\Zz$-module basis for $X$.
\end{claim}

Indeed, for any $\gamma=\sum_{\sigma\in\Delta_i} a_\sigma\sigma\in X$, let
$$
\gamma' ~=~ \sum_{\sigma\in\Delta_i} a_\sigma\sigma ~-~ \sum_{\sigma\in\Theta} a_\sigma\hat\sigma
~=~ \sum_{\sigma\in\Upsilon_i} a_\sigma\sigma ~+~ \sum_{\sigma\in\Theta} a_\sigma(\sigma-\hat\sigma).
$$
By the previous observation, we have $\gamma'\in X\cap
C_i(\Upsilon;\Zz)=\HH_i(\Upsilon;\Zz)$.  On the other hand,
$\HH_i(\Upsilon;\Zz)=0$ (because $\Upsilon$ is an $i$-dimensional
simplicial tree), so in fact $\gamma'=0$.  Therefore, $\gamma =
\sum_{\sigma\in\Theta} a_\sigma \hat\sigma$, proving Claim~\ref{claim-four}.

\begin{claim}\label{claim-five}
Suppose that the action of the reduced Laplacian $\rL$ on $C_i(\Theta;\Zz)$ is given by
$$\rL \theta = \sum_{\sigma\in\Theta} \ell_{\theta\sigma} \sigma$$
for $\theta\in\Theta$.  Then
$L\theta=\sum_{\sigma\in\Theta}\ell_{\sigma\theta}\hat\sigma$.
\end{claim}

Indeed, the chain $L\theta-\sum_{\sigma\in\Theta}\ell_{\sigma\theta}\hat\sigma$ belongs both to~$X$ and to~$C_i(\Upsilon;\Zz)$, so it must be zero
(as in the proof of Claim~\ref{claim-four}, because $\Upsilon$ is an $i$-dimensional simplicial tree), establishing Claim~\ref{claim-five}
and completing the proof
\end{proof}

\begin{remark}
Claim~\ref{claim-one} holds for
any subcomplex $\Upsilon\subseteq\Delta$ \emph{containing}
a torsion-free spanning tree or, more interestingly, if $\Upsilon$
is ``torsion-minimal'', i.e., if $\HH_{i-1}(\Upsilon;\Zz) = \HH_{i-1}(\Delta;\Zz)$.  The remainder of the proof requires $\Upsilon$ to be
torsion-free, but it should be possible to extend these methods
to the case that it is 
torsion-minimal.

Not every APC complex need have a torsion-minimal spanning tree.  For instance, for~$k\in\Nn$, let $M_k=M(\Zz/k\Zz,2)$ be the Moore space
\cite[p.~143]{Hatcher} obtained by attaching a 2-cell to the circle $S^1$ by a map of degree~$k$; let $X$ be the complex obtained
by identifying the 1-skeletons of~$M_2$ and~$M_3$ (thus, $X$ is a CW-complex with two 2-cells, one 1-cell, and one 0-cell); and
let~$\Delta$ be a simplicial triangulation of~$X$
(in particular, note that $\Delta$ is a regular CW-complex with the same homology as $X$).
Then $\HH_2(\Delta;\Zz)\isom\Zz$ and $\HH_i(\Delta;\Zz)=0$ for $i<2$.  On the other hand, the simplicial spanning trees of~$\Delta$ are precisely the subcomplexes~$\Upsilon$ obtained by deleting a single facet~$\sigma$
(just as though $\Upsilon$ were a simplicial sphere, which it certainly is not---see
Remark~\ref{sphere-count} and subsequently), and each such 
$\Upsilon$ has $\HH_1(\Upsilon;\Zz)\isom\Zz/3\Zz$ or $\Zz/2\Zz$, according as $\sigma$ is a face of $M_2$ or $M_3$.
(We thank Vic Reiner for providing this example.)	
\end{remark}

\begin{example}\label{bipyr-thm}
We return to the bipyramid $B$ from Example \ref{bipyr-trees} to illustrate Theorem \ref{main-thm}.  We must first pick a 1-dimensional spanning tree $\Upsilon$; we take $\Upsilon$ to be the spanning tree with edges $12, 13, 14, 15$.  (In general, we must also make sure $\Upsilon$ is torsion-free, but this is always true for 1-dimensional trees.)  Let $L = \Lud_{B,1}\st C_1(B;\Zz) \rightarrow C_1(B;\Zz)$ be the full Laplacian; note that $L$ is a $9\x 9$ matrix whose rows and columns are indexed by the edges of $B$.  The reduced Laplacian $\tilde L$ is formed by removing the rows and columns indexed by the edges of $\Upsilon$:
\begin{displaymath}
\tilde L ~=~
\bordermatrix{
     & 23 & 24 & 25 & 34 & 35 \cr
23 & 3 & -1 & -1 & 1 & 1 \cr
24 & -1 &2 & 0 & -1 & 0 \cr
25 & -1 & 0 & 2 & 0 & -1 \cr
34 & 1 & -1 & 0 & 2 & 0 \cr
35 & 1 & 0 & -1 & 0 & 2
}.
\end{displaymath}
The critical group $K_1(B)$ is the cokernel of this matrix, i.e., $K_1(B)\isom\Zz^5/\im \tilde L$.  Since $\tilde L$ has full rank, it follows that $K_1(B)$ is finite; its order is $\det(\tilde L) = 15$.
\end{example}

\section{The Order of the Critical Group}\label{order-section}

The matrix-tree theorem implies that the order of the critical group of a graph equals the number of
spanning trees.  In this section, we explain how this equality carries over to the higher-dimensional setting.

As before, let $\Delta$ be a pure $d$-dimensional simplicial complex.
Let $\SST_i(\Delta)$ denote the set of all $i$-dimensional spanning trees of $\Delta$
(that is, spanning trees of the $i$-dimensional skeleton~$\Delta_{(i)}$).
Define 
\begin{align*}
   \tau_i  &= \sum_{\Upsilon\in\SST_i(\Delta)} |\HH_{i-1}(\Upsilon;\Zz)|^2,\\
   \pi_i   &= \text{ product of all nonzero eigenvalues of $L_{\Delta, i-1}$}.
\end{align*}
The following formulas relate the tree enumerators $\tau_i$ to the linear-algebraic
invariants $\pi_i$.

\begin{theorem}[The simplicial matrix-tree theorem]
\cite[Thm.~1.3]{DKM}
\label{thm:SMTT}
For all $i\leq d$, we have
\begin{displaymath}
\pi_i = \frac{\tau_i \tau_{i-1}}{|\HH_{i-2}(\Delta;\Zz)|^2}.
\end{displaymath}
Moreover, if  $\Upsilon$ is any spanning tree of $\Delta_{(i-1)}$,
then
\begin{displaymath}
\tau_i = \frac{|\HH_{i-2}(\Delta;\Zz)|^2}{|\HH_{i-2}(\Upsilon;\Zz)|^2} \det \tilde{L},
\end{displaymath}
where $\tilde{L}$ is the reduced Laplacian formed by removing the rows and columns corresponding to $\Upsilon$.
\end{theorem}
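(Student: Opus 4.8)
The plan is to prove the second, ``moreover'' equation (the reduced-determinant formula) first, since it is the computational heart, and then to deduce the eigenvalue-product formula from it. Write $\bd_i$ for the boundary matrix, whose rows are indexed by $\Delta_{i-1}$ and columns by $\Delta_i$, so that $L_{\Delta,i-1}=\bd_i\cbd_i$; fix the $(i-1)$-spanning tree $\Upsilon$, put $W=\Delta_{i-1}\sm\Upsilon_{i-1}$, and let $N$ be the submatrix of $\bd_i$ consisting of the rows in $W$, so that $\rL=NN^T$. The first step is to check that $N$ has full row rank $|W|=\rank\bd_i$ over $\Qq$; this is the higher analogue of the reduced incidence matrix of a connected graph having full rank, and it is exactly where the spanning-tree hypothesis on $\Upsilon$ enters, because the defining conditions of $\Upsilon$ say that $\{\bd_{i-1}\sigma:\sigma\in\Upsilon_{i-1}\}$ is a $\Qq$-basis of $\im\bd_{i-1}$, and a short argument in the cocycle space $\ker\cbd_i=\im\cbd_{i-1}$ (using $\HH_{i-1}(\Delta;\Qq)=0$) then rules out any dependence among the rows in $W$. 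With full row rank secured, the Cauchy--Binet formula gives
\[
\det\rL \;=\; \sum_{S}\,(\det N_S)^2,
\]
summed over the $|W|$-element column sets $S\subseteq\Delta_i$, where $N_S$ is the square submatrix of $N$ on the columns $S$.

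Next I would interpret each minor $\det N_S$ topologically. Given such an $S$, let $\Gamma$ be the subcomplex with $i$-faces $S$ and full skeleton $\Delta_{(i-1)}$. Since $\Gamma$ and $\Upsilon$ share the skeleton $\Delta_{(i-2)}$ and $\Gamma_{i-1}=\Delta_{i-1}$, the relative chain complex of $(\Gamma,\Upsilon)$ is concentrated in degrees $i$ and $i-1$, where it is exactly $N_S\st\Zz^S\to\Zz^W$. Hence $\HH_i(\Gamma,\Upsilon)=\ker N_S$ and $\HH_{i-1}(\Gamma,\Upsilon)=\coker N_S$, so $\det N_S\neq 0$ precisely when $\Gamma$ is an $i$-dimensional spanning tree, and in that case $|\det N_S|=|\HH_{i-1}(\Gamma,\Upsilon;\Zz)|$. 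This turns the Cauchy--Binet sum into a sum over $i$-trees $\Gamma$.

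The remaining, and in my view hardest, step is the torsion bookkeeping that replaces the relative order $|\HH_{i-1}(\Gamma,\Upsilon)|$ by the intrinsic torsion $|\HH_{i-1}(\Gamma)|$ occurring in $\tau_i$; this is precisely where the higher-dimensional torsion phenomena absent for graphs must be handled. I would run the long exact sequence of the pair $(\Gamma,\Upsilon)$ near degree $i-1$. In the contributing (nonsingular) case every relevant group is finite --- $\HH_{i-1}(\Gamma)$ and $\HH_{i-1}(\Upsilon)$ because $\Gamma$ and $\Upsilon$ are spanning trees, and $\HH_{i-2}(\Upsilon)$, $\HH_{i-2}(\Gamma)=\HH_{i-2}(\Delta)$ by the APC hypothesis --- so orders multiply along the sequence. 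Feeding in $\HH_i(\Gamma,\Upsilon)=0$, the integral vanishing $\HH_{i-1}(\Upsilon;\Zz)=0$ (the top homology of the tree $\Upsilon$), and the surjection $\HH_{i-2}(\Upsilon)\surj\HH_{i-2}(\Delta)$, the sequence collapses to
\[
|\HH_{i-1}(\Gamma,\Upsilon)| \;=\; \frac{|\HH_{i-1}(\Gamma)|\,|\HH_{i-2}(\Upsilon)|}{|\HH_{i-2}(\Delta)|}.
\]
Squaring, summing over $\Gamma$, and extracting the factor that does not depend on $\Gamma$ yields $\det\rL=\bigl(|\HH_{i-2}(\Upsilon)|^2/|\HH_{i-2}(\Delta)|^2\bigr)\,\tau_i$, which rearranges to the claimed determinant formula.

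Finally I would deduce the eigenvalue-product formula. As $L_{\Delta,i-1}$ is symmetric and positive semidefinite of rank $r=|W|$, its product of nonzero eigenvalues is the $r$-th elementary symmetric function of its spectrum, i.e.\ the sum of all $r\times r$ principal minors:
\[
\pi_i \;=\; \sum_{|B|=r}\det\bigl((L_{\Delta,i-1})_{B,B}\bigr).
\]
Such a principal minor is nonzero exactly when the rows $B$ of $\bd_i$ are independent, and by the matroid duality between the row space of $\bd_i$ and the column space of $\bd_{i-1}$ (a consequence of $\bd_{i-1}\bd_i=0$ together with the APC hypothesis) this happens precisely when $\Delta_{i-1}\sm B$ is the $(i-1)$-face set of an $(i-1)$-spanning tree $\Upsilon$, in which case the minor equals the reduced determinant $\det\rL$ for that $\Upsilon$. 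Substituting the determinant formula already proved and summing over all such $\Upsilon$ turns the remaining sum into $\sum_{\Upsilon\in\SST_{i-1}(\Delta)}|\HH_{i-2}(\Upsilon)|^2=\tau_{i-1}$, giving $\pi_i=\tau_i\tau_{i-1}/|\HH_{i-2}(\Delta)|^2$ and completing the proof.
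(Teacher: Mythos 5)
Your proposal is correct, but note that this paper never proves Theorem~\ref{thm:SMTT} itself --- it imports the result wholesale from \cite[Thm.~1.3]{DKM}, so the ``paper's proof'' is a citation. Your argument --- writing $\rL=NN^T$ and expanding by Cauchy--Binet, identifying the nonvanishing maximal minors with $i$-dimensional spanning trees $\Gamma$ via the relative complex of $(\Gamma,\Upsilon)$, converting the relative order $|\HH_{i-1}(\Gamma,\Upsilon;\Zz)|$ into $|\HH_{i-1}(\Gamma;\Zz)|\,|\HH_{i-2}(\Upsilon;\Zz)|/|\HH_{i-2}(\Delta;\Zz)|$ by the long exact sequence of the pair, and then obtaining $\pi_i$ as a sum of principal minors whose nonzero terms correspond, by the APC-induced matroid duality between rows of $\bd_i$ and columns of $\bd_{i-1}$, to $(i-1)$-dimensional spanning trees --- is essentially a faithful reconstruction of the proof given in \cite{DKM}, so it takes the same route as the paper's source rather than a genuinely different one.
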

Recall that when $d=1$, the number $\tau_1(\Delta)$ is simply the number of spanning
trees of the graph $\Delta$, and $\tau_0(\Delta)$ is the number of vertices (i.e., 0-dimensional
spanning trees).  Therefore, the formulas above specialize to the classical matrix-tree theorem. 

\begin{corollary}
\label{count-corollary}
Let $i<d$.  Suppose that $\HH_{i-1}(\Delta;\Zz)=0$ and that $\Delta$ has an $i$-dimensional spanning tree
$\Upsilon$ such that $\HH_{i-1}(\Upsilon;\Zz)=0$.  Then
the order of the $i$-dimensional critical group is the torsion-weighted number of $(i+1)$-dimensional
spanning trees, i.e.,
\begin{displaymath}
|K_i(\Delta)|=\tau_{i+1}.
\end{displaymath}
\end{corollary}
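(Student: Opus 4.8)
The plan is to reduce $|K_i(\Delta)|$ to a single determinant using Theorem~\ref{main-thm}, and then to recognize that determinant as $\tau_{i+1}$ via the simplicial matrix-tree theorem applied one dimension higher. First I would check that $K_i(\Delta)$ is genuinely finite, so that its order is computed by a determinant. Since $\Delta$ is APC and $i<d$, the rational homology $\HH_i(\Delta;\Qq)$ vanishes, i.e. $\beta_i(\Delta)=0$; together with the containment $\im L\subseteq\ker\bd_i$ and the rank count $\rank L=\rank\bd_{i+1}=\dim_\Qq\ker\bd_i$ (the middle equality being exactly what $\beta_i=0$ provides), this forces the quotient $\ker\bd_i/\im L$ to be finite, just as in the graph case of \eqref{graph-chain}. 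The hypothesis $\HH_{i-1}(\Upsilon;\Zz)=0$ lets me invoke Theorem~\ref{main-thm}, giving $K_i(\Delta)\isom\Zz^\Theta/\im\tilde L$ with $\tilde L$ a square integer matrix; since the cokernel is finite, $\tilde L$ is nonsingular and $|K_i(\Delta)|=|\det\tilde L|$.

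The heart of the argument is to apply Theorem~\ref{thm:SMTT} at index $i+1$ rather than $i$. This is legitimate because $i+1\leq d$, and I would take the spanning tree of $\Delta_{((i+1)-1)}=\Delta_{(i)}$ to be precisely our $\Upsilon$. The point to verify carefully is that the reduced Laplacian appearing there---obtained from $L_{\Delta,i}$ by deleting the rows and columns indexed by the $i$-faces of $\Upsilon$---is literally the same matrix $\tilde L$ furnished by Theorem~\ref{main-thm}. Granting this, the matrix-tree formula reads $\tau_{i+1}=\bigl(|\HH_{i-1}(\Delta;\Zz)|^2/|\HH_{i-1}(\Upsilon;\Zz)|^2\bigr)\det\tilde L$, and the two hypotheses $\HH_{i-1}(\Delta;\Zz)=0$ and $\HH_{i-1}(\Upsilon;\Zz)=0$ make both torsion factors equal to $1$, so that $\tau_{i+1}=\det\tilde L$.

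Finally I would remove the absolute value: since $L=\bd_{i+1}\cbd_{i+1}$ is positive semidefinite and $\tilde L$ is a principal submatrix of it, we have $\det\tilde L\geq 0$, hence $\det\tilde L>0$ by nonsingularity, and therefore $|K_i(\Delta)|=\det\tilde L=\tau_{i+1}$. The main obstacle is purely one of bookkeeping: one must apply the matrix-tree theorem at level $i+1$, not $i$, so that its reduced Laplacian is built from $L_{\Delta,i}$ and matches the Laplacian underlying $K_i$. Once that index shift is identified, the two torsion corrections in the matrix-tree formula are exactly annihilated by the corollary's hypotheses, and the determinant identity is immediate.
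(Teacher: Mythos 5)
Your proposal is correct and follows exactly the derivation the paper intends (the corollary is stated without a written-out proof, but it is precisely the combination of Theorem~\ref{main-thm} with the second formula of Theorem~\ref{thm:SMTT} applied at index $i+1$, where the two torsion factors become trivial under the stated hypotheses). Your additional care with finiteness, nonsingularity of $\tilde L$, and the sign of $\det\tilde L$ fills in details the paper leaves implicit, and the key index-shift observation---that the reduced Laplacian in the matrix-tree theorem at level $i+1$ is literally the $\tilde L$ of Theorem~\ref{main-thm}---is exactly right.
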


\begin{example}\label{bipyr-count}
Returning again to the bipyramid $B$, recall that 15 is both the number of its spanning trees (Example \ref{bipyr-trees}) and the order of its 1-dimensional critical group (Example \ref{bipyr-thm}), in each case because $\det \tilde L = 15$.
\end{example}

Another formula for the orders of the critical groups of~$\Delta$ is
as follows.  For $0\leq j\leq d$, denote by $\pi_j$ the product of the
nonzero eigenvalues of the Laplacian $L_{j-1}^{ud}=\bd_j\cbd_j$.  Then
Corollary~\ref{count-corollary}, together with
\cite[Corollary~2.10]{DKM2}, implies the following formula for
$|K_i(\Delta)|$ as an alternating product:

\begin{corollary} \label{alt-product}
Under the conditions of Corollary ~\ref{count-corollary}, for every $i\leq d$, we have
\begin{displaymath}
|K_i(\Delta)| = \prod_{j=0}^i \pi_j^{(-1)^{i-j}}.
\end{displaymath}
\end{corollary}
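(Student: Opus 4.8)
The plan is to convert the statement into a purely enumerative identity and then telescope. By Corollary~\ref{count-corollary}, under the stated hypotheses we have $|K_i(\Delta)| = \tau_{i+1}$, so it is enough to express the tree enumerator $\tau_{i+1}$ as an alternating product of the eigenvalue products $\pi_j$. The tool that connects the two is the simplicial matrix-tree theorem (Theorem~\ref{thm:SMTT}), which supplies the two-term relation $\pi_j = \tau_j\tau_{j-1}/|\HH_{j-2}(\Delta;\Zz)|^2$. In words, each single Laplacian eigenvalue product $\pi_j$ packages two \emph{consecutive} tree enumerators (up to a homology-torsion correction), and an alternating product of the $\pi_j$ is designed precisely to cancel these overlapping pairs.

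Concretely, I would substitute this relation into the alternating product and track the exponent with which each $\tau_k$ occurs. Since $\tau_k$ appears in exactly two factors, namely $\pi_k$ and $\pi_{k+1}$, and these enter the alternating product with opposite signs, every intermediate enumerator cancels and only the extreme term survives, leaving $\tau_{i+1}$. To fix the exact range of the product and the signs without ambiguity I would check the base case $i=0$ by hand: here $\pi_0$ is the product of the nonzero eigenvalues of $\bd_0\cbd_0$, which is just $f_0(\Delta)=\tau_0$, while $\pi_1 = \tau_1\tau_0$, so the alternating quotient $\pi_1/\pi_0$ returns $\tau_1$, the number of spanning trees of the $1$-skeleton, as it must. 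This base case also reveals that the product must be carried through the top index $i+1$, and the conventions $\tau_{-1}=1$ and $\HH_{-1}(\Delta;\Zz)=\HH_{-2}(\Delta;\Zz)=0$ make the lowest-order factors behave correctly.

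The step I expect to be the main obstacle is the homology-torsion factors $|\HH_{j-2}(\Delta;\Zz)|^2$ riding along in Theorem~\ref{thm:SMTT}. They telescope in the same way as the $\tau_j$, but the factors that do not cancel are the intermediate integral homology orders $|\HH_k(\Delta;\Zz)|$ for $0\le k\le i-1$, and these must all be trivial for the clean product in the statement to be valid. The hypotheses of Corollary~\ref{count-corollary} give $\HH_{i-1}(\Delta;\Zz)=0$ and a torsion-free $i$-dimensional spanning tree, and the existence of that tree forces $\HH_k(\Delta;\Qq)=0$ for $k<i$; the delicate part is upgrading this rational vanishing to integral vanishing (or else verifying that the surviving torsion factors cancel in pairs) so that no spurious term remains. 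This is exactly the bookkeeping that \cite[Corollary~2.10]{DKM2} is built to handle, so the most efficient route is to cite that result for the torsion-free telescoping identity and to confine my own work to checking that the present hypotheses meet its requirements and that the index endpoints contribute no stray $|\HH_{-1}(\Delta;\Zz)|$ or $\tau_{-1}$ factor.
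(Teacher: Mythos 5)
Your proposal takes exactly the paper's route: the paper offers no argument beyond the remark that Corollary~\ref{count-corollary} (which gives $|K_i(\Delta)|=\tau_{i+1}$) together with \cite[Corollary~2.10]{DKM2} implies the formula, and that cited corollary is precisely the telescoping of Theorem~\ref{thm:SMTT} that you carry out by hand. So in substance your proof and the paper's agree.

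What deserves emphasis is that your base-case check is correct and exposes a genuine misprint in the statement you were asked to prove. With $\pi_j$ defined (as in the paper) to be the product of the nonzero eigenvalues of $L^{\mathrm{ud}}_{j-1}=\bd_j\cbd_j$, the relation $\pi_j=\tau_j\tau_{j-1}/|\HH_{j-2}(\Delta;\Zz)|^2$ makes the printed product $\prod_{j=0}^{i}\pi_j^{(-1)^{i-j}}$ telescope to $\tau_i$, not to $\tau_{i+1}=|K_i(\Delta)|$; already for a connected graph ($i=0$) the printed formula would assert $|K_0(\Delta)|=\pi_0=f_0(\Delta)$, the number of vertices, rather than the number of spanning trees. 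So the product must indeed run through $j=i+1$, giving $|K_i(\Delta)|=\prod_{j=0}^{i+1}\pi_j^{(-1)^{i+1-j}}$, exactly as you conclude. Your torsion caveat is also well taken: the uncancelled factors are $|\HH_k(\Delta;\Zz)|^{\pm 2}$ for $0\le k\le i-2$ (the $k=i-1$ factor is killed by the hypothesis $\HH_{i-1}(\Delta;\Zz)=0$, and $\HH_0$ is always free, so the issue only arises for $i\ge 3$); since rational acyclicity below dimension $i$ does not force these integral groups to be torsion-free, the clean formula requires the torsion-freeness hypotheses of \cite[Corollary~2.10]{DKM2}, which the conditions of Corollary~\ref{count-corollary} do not literally supply. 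Citing that corollary after verifying its hypotheses, as you propose, is the right repair and is what the paper implicitly intends.
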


The condition that $\Delta$ and $\Upsilon$ be torsion-free is not too restrictive, in the sense that many simplicial complexes of interest in combinatorics
(for instance, all shellable complexes) are torsion-free and have torsion-free spanning trees. 
\begin{remark}
\label{sphere-count}
When every spanning tree of $\Delta$ is torsion-free, the order of the
critical group is exactly the number of spanning trees.  This is a
strong condition on $\Delta$, but it does hold for some complexes ---
notably for simplicial spheres, whose spanning trees are exactly the
(contractible) subcomplexes obtained by deleting a single facet.  Thus
a given explicit bijection between spanning trees and elements of the
critical group amounts to an abelian group structure on the set of
facets of a simplicial sphere.
\end{remark}

Determining the structure of the critical group is not easy, even for very
special classes of graphs; see, e.g., \cite{Vic1,Vic2}.  One of the
first such results is due to Lorenzini \cite{Lor1,Lor2} and Merris
\cite[Example~1(1.4)]{Merris}, who independently noted that the critical
group of the cycle graph on $n$ vertices is $\Zz/n\Zz$, the cyclic group
on $n$ elements.  Simplicial spheres are the natural
generalizations of cycle graphs from a tree-enumeration point of view.
In fact, the theorem of Lorenzini and Merris carries over to
simplicial spheres, as we now show.

\begin{theorem}
\label{spheres}
Let $\Sigma$ be a $d$-dimensional simplicial sphere with $n$ facets.  Then
$K_{d-1}(\Sigma) \cong \Zz/n\Zz$. 
\end{theorem}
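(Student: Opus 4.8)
The plan is to bypass the reduced-Laplacian machinery of Theorem~\ref{main-thm} and argue directly from the definition, exploiting the fact that a sphere has almost no homology. Write $L=\Lud_{\Sigma,d-1}=\bd_{d}\cbd_{d}$, so that $K_{d-1}(\Sigma)=\ker\bd_{d-1}/\im(\bd_{d}\cbd_{d})$. Since $\Sigma$ triangulates $S^{d}$, we have $\HH_{d-1}(\Sigma;\Zz)=0$ and $\HH_{d}(\Sigma;\Zz)=\Zz$. The first fact gives $\ker\bd_{d-1}=\im\bd_{d}$; the second says $\ker\bd_{d}=\Zz\omega$, where $\omega=\sum_{F}\varepsilon_{F}\,F$ is the fundamental cycle, the sum running over the $n$ facets $F$ with coherent orientation signs $\varepsilon_{F}\in\{\pm1\}$. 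Since $\im(\bd_{d}\cbd_{d})=\bd_{d}(\im\cbd_{d})$, substituting yields $K_{d-1}(\Sigma)=\im\bd_{d}/\bd_{d}(\im\cbd_{d})$.

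Next I would push this quotient through the boundary map. The surjection $\bd_{d}\colon C_{d}(\Sigma;\Zz)\to\im\bd_{d}$ has kernel $\Zz\omega$ and carries $\im\cbd_{d}$ onto $\bd_{d}(\im\cbd_{d})$, so by the third isomorphism theorem $K_{d-1}(\Sigma)\isom C_{d}(\Sigma;\Zz)/(\im\cbd_{d}+\Zz\omega)$. The entire problem thereby reduces to understanding the single subgroup $\im\cbd_{d}$ together with the class of $\omega$.

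The crux is to identify $\im\cbd_{d}$ \emph{integrally} as $\omega^{\perp}:=\{x\in C_{d}(\Sigma;\Zz):\langle x,\omega\rangle=0\}$. Over $\Qq$ this is automatic, since $\cbd_{d}$ is the adjoint of $\bd_{d}$ and hence $\im\cbd_{d}\otimes\Qq=(\ker\bd_{d}\otimes\Qq)^{\perp}=(\Qq\omega)^{\perp}$. To upgrade to $\Zz$ I would show $\im\cbd_{d}$ is saturated (a direct summand), equivalently that $\coker\cbd_{d}$ is torsion-free. This follows because $\coker\bd_{d}=C_{d-1}(\Sigma;\Zz)/\im\bd_{d}=C_{d-1}(\Sigma;\Zz)/\ker\bd_{d-1}\isom\im\bd_{d-1}$ is a submodule of a free module, hence torsion-free; therefore every nonzero Smith invariant of $\bd_{d}$, and so of $\cbd_{d}=\bd_{d}^{T}$, equals $1$, whence $\coker\cbd_{d}$ is free and $\im\cbd_{d}$ is saturated. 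As $\omega^{\perp}$ is also saturated of the same rank $n-1$, the containment $\im\cbd_{d}\subseteq\omega^{\perp}$ forces equality. This integral saturation step is the part I expect to demand the most care, since only the rational statement is formal.

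Finally, the pairing $x\mapsto\langle x,\omega\rangle$ is a homomorphism $C_{d}(\Sigma;\Zz)\to\Zz$ that is surjective, because $\langle F,\omega\rangle=\varepsilon_{F}=\pm1$ for each facet $F$; its kernel is exactly $\omega^{\perp}=\im\cbd_{d}$, so it induces an isomorphism $C_{d}(\Sigma;\Zz)/\im\cbd_{d}\isom\Zz$ sending the class of $\omega$ to $\langle\omega,\omega\rangle=\sum_{F}\varepsilon_{F}^{2}=n$. Quotienting further by the image of $\Zz\omega$ then gives $K_{d-1}(\Sigma)\isom\Zz/n\Zz$, as claimed. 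This is consistent with Corollary~\ref{count-corollary} and Remark~\ref{sphere-count}, according to which $\tau_{d}=n$: the $d$-dimensional spanning trees of $\Sigma$ are precisely the $n$ contractible complexes obtained by deleting a single facet, each contributing $|\HH_{d-1}(\Upsilon;\Zz)|^{2}=1$.
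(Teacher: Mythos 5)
Your proof is correct, and it takes a genuinely different route from the paper's. The paper argues in two independent steps: the order $|K_{d-1}(\Sigma)|=n$ is imported from the simplicial matrix-tree theorem via Remark~\ref{sphere-count} (the spanning trees of a sphere are exactly the $n$ facet-deletions, all torsion-free), and cyclicity is then proved combinatorially: since $\HH_{d-1}(\Sigma;\Zz)=0$, the group is generated by the boundaries $\bdo_d\sigma$ of facets, and if facets $\sigma,\sigma'$ share a ridge $\rho$, the pseudomanifold property gives $0\equiv\bdo_d\cbd_d(\rho)=\pm(\bdo_d\sigma\pm\bdo_d\sigma')$ modulo $\im L$, so by strong connectedness any single facet boundary generates. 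You instead obtain the order and the structure in one stroke by pure homological algebra: $K_{d-1}(\Sigma)\isom C_d(\Sigma;\Zz)/(\im\cbd_d+\Zz\omega)$ by the third isomorphism theorem, then the integral identification $\im\cbd_d=\omega^{\perp}$, then evaluation of the pairing with $\omega$. Your saturation step --- the crux, as you say --- is sound: $\coker\bd_d\isom\im\bd_{d-1}$ is torsion-free because $\ker\bd_{d-1}=\im\bd_d$, transposition preserves Smith invariants, hence $\coker\cbd_d$ is free and $\im\cbd_d$ is a saturated subgroup of rank $n-1$, which forces equality with $\omega^{\perp}$. What your route buys: it is independent of Theorem~\ref{main-thm}, Corollary~\ref{count-corollary}, and the spanning-tree machinery altogether, and it produces an explicit isomorphism $K_{d-1}(\Sigma)\to\Zz/n\Zz$, $\bd_d y\mapsto\langle y,\omega\rangle\bmod n$, under which the boundary of any facet maps to $\pm 1$ (recovering the paper's generator). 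What the paper's route buys: brevity, given the machinery it has already built, and a generalization to pseudomanifolds with $\HH_{d-1}(\Sigma;\Zz)=0$ that requires no further comment; your proof in fact covers the same generality, but only after one observes that such a pseudomanifold is automatically orientable (non-orientability would force $2$-torsion in $\HH_{d-1}(\Sigma;\Zz)$, by universal coefficients applied to the nonzero class in $\HH_d(\Sigma;\Zz/2\Zz)$), so that the fundamental cycle $\omega$ with $\pm1$ coefficients that your argument needs does exist.
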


\begin{proof}
Let $K=K_{d-1}(\Sigma)$.  Remark \ref{sphere-count} implies that $|K|=n$,
so it is sufficient to show that it is cyclic.
In what follows, we use the standard
terms ``facets'' and ``ridges'' for faces of~$\Sigma$
of dimensions~$d$ and~$d-1$, respectively.

By definition, $K$ is generated by
$(d-1)$-dimensional cycles, that is, elements of $\ker\bdo_{d-1}$.  Since
$\HH_{d-1}(\Sigma;\Zz)=0$, all such cycles are in fact
$(d-1)$-dimensional boundaries of $d$-dimensional chains.  Therefore,
$K$ is generated by the boundaries of facets, modulo the image
of $L=\bdo_d\cbd_d$.
We now show that for any two facets $\sigma,\sigma'\in\Sigma$,
we have $\bdo_d\sigma\equiv\pm\bdo_d\sigma$ modulo $\im L$.
This will imply
that~$K$ can be generated by a single element as a $\Zz$-module.

Since $\Sigma$ is a sphere, it is in particular a pseudomanifold, so every ridge is
in the boundary of at most two facets \cite[p.~24]{Stanley}.
Consequently, if two facets $\sigma,\sigma'$ share a ridge $\rho$, then no other
facet contains $\rho$, and we have
$$0 \equiv \bdo\cbd(\rho) = \bdo(\pm \sigma \pm \sigma') = \pm(\bdo \sigma \pm \bdo \sigma')$$
(where $\equiv$ means ``equal modulo $\im L$'').
Hence $\bdo(\sigma)$ and $\bdo(\sigma')$ represent the same or opposite
elements of $K$.  Furthermore, the definition of pseudomanifold
guarantees that for any two facets $\sigma,\sigma'$, there is a
sequence of facets $\sigma = \sigma_0, \sigma_1, \ldots, \sigma_k = \sigma'$ such that each
$\sigma_j$ and $\sigma_{j+1}$ share a common ridge.  Therefore, by transitivity, the boundary
of any single facet generates~$K$, as desired.
\end{proof}

The condition that $\Sigma$ be a simplicial sphere can be relaxed:
in fact, the proof goes through for any $d$-dimensional
pseudomanifold $\Sigma$ such that $\HH_{d-1}(\Sigma; \Zz) = 0$.
On the other hand, if $\Sigma$ is APC in addition
to being a pseudomanifold (for example, certain lens spaces---see
\cite[p.~144] {Hatcher}), then
it has the rational homology type of either a sphere or a ball
(because $\HH_d(\Sigma;\Qq)$ is either $\Qq$ or 0;
see \cite[p.~24]{Stanley}).

\begin{remark}
\label{Molly}
Let $\Delta$ be the simplex on vertex set~$[n]$, and let
$k\leq n$.  Kalai \cite{Kalai}
proved that $\tau_k(\Delta)=n^{\binom{n-2}{k}}$ for every $n$ and $k$,
generalizing Cayley's formula $n^{n-2}$ for the number of
labeled trees on $n$ vertices.  Maxwell~\cite{Maxwell} studied
the skew-symmetric matrix
$$A = \left[\begin{array}{c}
\pad\rbd_{\Delta,k}\\ \hline \pad-\rcbd_{\Delta,k+1}
\end{array}\right]$$
where~$\rbd_{\Delta,k}$ denotes the reduced boundary map
obtained from the usual simplicial boundary $\bd_{\Delta,k}$ by deleting
the rows corresponding to $(k-1)$-faces containing vertex~1,
and~$\rcbd_{\Delta,k+1}$ is
obtained from~$\cbd_{\Delta,k+1}$ by deleting
the rows corresponding to $k$-faces \emph{not} containing vertex~1.
(Note that Maxwell and Kalai use the symbol $I^k_r(X)$
for what we call $\bd_{\Delta,k}$.)

In particular, Maxwell \cite[Prop.~5.4]{Maxwell} proved that
$$\coker A \isom (\Zz/n\Zz)^{\binom{n-2}{k}}.$$
The matrix~$A$ is not itself a Laplacian, but is closely related
to the Laplacians of $\Delta$.  Indeed,
Maxwell's result, together with ours, implies that
all critical groups of~$\Delta$ are direct sums of cyclic groups
of order~$n$, for the following reasons.  We have
$$
AA^T = -A^2 =
\left[\begin{array}{c}
\pad\rbd_{\Delta,k}\\ \hline -\pad\rcbd_{\Delta,k+1}
\end{array}\right]
\left[\begin{array}{c|c}
\rcbd_{\Delta,k} & -\rbd_{\Delta,k+1}
\end{array}\right]
=
\left[\begin{array}{c|c}
\pad\rL^{\textrm{ud}}_{k-1} & 0\\ \hline
0 & \pad\rL^{\textrm{du}}_{k+1}
\end{array}\right]
$$
where ``ud'' and ``du'' stand for ``up-down'' and ``down-up'' respectively
(see footnote~\ref{L-notation-note}).  Therefore
\begin{align*}
\coker(AA^T)
&\isom \coker(\rL^{\textrm{ud}}_{k-1}) \oplus \coker(\rL^{\textrm{du}}_{k+1})\\
&\isom \coker(\rL^{\textrm{ud}}_{k-1}) \oplus \coker(\rL^{\textrm{ud}}_{k})\\
&\isom K_{k-1}(\Delta)\oplus K_k(\Delta),
\end{align*}
where the second step follows from the general fact that~$MM^T$
and~$M^TM$ have the same multisets of nonzero eigenvalues
for any matrix~$M$, and the third step follows from
Theorem~\ref{main-thm}.  On the other hand, we have
$\coker(A A^T) = \coker(-A^2) = \coker(A^2) \isom(\coker A) \oplus (\coker A)$.
It follows from Maxwell's result that the $k$th critical group
of the $n$-vertex simplex is a direct sum of $\binom{n-2}{k}$ copies of~$\Zz/n\Zz$, as desired.
\end{remark}

\section{The Critical Group as a Model of Discrete Flow}
\label{sec:model}
In this section, we describe an interpretation of the critical group in terms of flow, analogous to the chip-firing game.  By definition of $K_i(\Delta)$, its elements may be represented as integer vectors $\cc=(c_F)_{F\in\Delta_i}$, modulo an equivalence relation given by the Laplacian.  These configurations are the analogues of the configurations of chips in the graph case ($i=0$).  
When $i=1$, it is natural to interpret $c_F$ as a flow along the edge~$F$, in the direction given by some predetermined orientation; a negative value on an edge corresponds to flow in the opposite direction.  More generally, if $F$ is an $i$-dimensional face, then we can interpret $c_F$ as a generalized \emph{$i$-flow}, again with the understanding that a negative value on a face means a $i$-flow in the opposite orientation.  For instance, 2-flow on a triangle represents circulation around the triangle, and a negative 2-flow means to switch between clockwise and counterclockwise.

When $i=1$, the condition $\cc\in\ker \bd_i$ means that flow neither accumulates nor depletes at any vertex; intuitively, matter is conserved.  In general, we call an $i$-flow \emph{conservative} if it lies in $\ker \bd_i$.  For instance, when $i=2$, the $\bd_i$ map converts 2-flow around a single triangle into 1-flow along the three edges of its boundary in the natural way; for a 2-flow on $\Delta$ to be conservative, the sum of the resulting 1-flows on each edge must cancel out, leaving no net flow along any edge.  In general, the sum of (the boundaries of) all the $i$-dimensional flows surrounding an $(i-1)$-dimensional face must cancel out along that face.

That the group $K_i(\Delta)$ is a quotient by the image of the Laplacian means that two configurations are equivalent if they differ by an integer linear combination of Laplacians applied to $i$-dimensional faces.  This is analogous to the chip-firing game, where configurations are equivalent when it is possible to get from one to the other by a series of chip-firings, each of which corresponds to adding a column vector of the Laplacian.
When $i=1$, it is easy to see that firing an edge~$e$ (adding the image of its Laplacian to a configuration) corresponds to diverting one unit of flow around each triangle containing~$e$ (see Example \ref{bipyr-flow}).  More generally, to fire an $i$-face $F$ means to divert one unit of $i$-flow from $F$ around each $(i+1)$-face containing~$F$.

By Theorem \ref{main-thm}, we may compute the critical group as $\Zz^\Theta$ modulo the image of the reduced Laplacian.  In principle, passing to the reduced Laplacian means ignoring the $i$-flow along each facet of an $i$-dimensional spanning tree $\Upsilon$.  In the graph case ($i=0$), this spanning tree is simply the bank vertex.  The higher-dimensional generalization of this statement is that the equivalence class of a configuration $\cc$ is determined
by the subvector $(c_F)_{F\in\Delta\sm\Upsilon}$.

A remaining open problem is to identify the higher-dimensional ``critical configurations'', i.e., a set of stable and recurrent
configurations that form a set of coset representatives for the critical group.
Recall that in the chip-firing game, when vertex $i$ fires, every vertex other than $i$ either gains a chip or stays unchanged.
Therefore, we can define stability simply by the condition $\cc_i<\deg(i)$ for every non-bank vertex $i$.  On the other hand, when
a higher-dimensional face fires, the flow along nearby faces can actually decrease.  Therefore, it is not as easy to define stability.
For instance, one could try to define stability by the condition that no face can fire without forcing some face (either itself or one of its
neighbors) into debt.  However, with this definition, there are some examples (such as the 2-skeleton of the tetrahedron) for which some of the cosets of the Laplacian admit more than one critical configuration.  Therefore, it is not clear how to choose a canonical set of coset representatives analogous
to the critical configurations of the graphic chip-firing game.

\begin{example}\label{bipyr-flow}
We return once again to the bipyramid $B$, and its 1-dimensional spanning tree $\Upsilon$ with edges $12, 13, 14, 15$.  If we pick 1-flows on $\Theta = \Delta_1 \sm \Upsilon$ as shown in Figure \ref{flow-exA}, it is easy to compute that we need 1-flows on $\Upsilon$ as shown in Figure \ref{flow-exB} to make the overall flow 1-conservative.  
Since Theorem \ref{main-thm} implies we can always pick flows on $\Upsilon$ to make the overall flow 1-conservative, we only show flows on $\Theta$ in subsequent diagrams.

If we fire edge 23, we get the configuration shown in Figure \ref{flow-exC}.  One unit of flow on edge 23 has been diverted across face 234 to edges 24 and 34, and another unit of flow has been diverted across face 235 to edges 25 and 35.  Note that the absolute value of flow on edge 25 has actually decreased, because of its orientation relative to edge 23.  If we subsequently fire edge 24, we get the configuration shown in Figure \ref{flow-exD}.  One unit of flow on edge 24 has been diverted across face 234 to edges 23 and 34, and another unit of flow has been diverted across face 124 to edges 12 and 14 (and out of the diagram of $\Theta$).

\begin{figure}[ht]
\begin{center}
\subfigure[\label{flow-exA}]{\setlength{\unitlength}{3947sp}%
\begingroup\makeatletter\ifx\SetFigFont\undefined%
\gdef\SetFigFont#1#2#3#4#5{%
  \reset@font\fontsize{#1}{#2pt}%
  \fontfamily{#3}\fontseries{#4}\fontshape{#5}%
  \selectfont}%
\fi\endgroup%
\begin{picture}(1095,922)(2574,-524)
\thicklines
\put(3158,239){\line( 3,-2){450}}
\put(3608,-61){\line(-3,-2){450}}
\put(3158,-361){\line(-3, 2){450}}
\put(2708,-61){\line( 3, 2){450}}
\put(3151,-361){\circle*{60}}
\put(3151,239){\circle*{60}}
\put(3601,-61){\circle*{60}}
\put(3158,239){\line( 0,-1){600}}
\put(3158,-361){\vector( 0, 1){375}}
\put(3608,-61){\vector(-3,-2){225}}
\put(3158,-361){\vector(-3, 2){225}}
\put(2708,-61){\vector( 3, 2){225}}
\put(3608,-61){\vector(-3, 2){225}}
\put(2701,-61){\circle*{60}}
\put(3654,-102){\makebox(0,0)[lb]{\smash{{\SetFigFont{8}{9.6}{\rmdefault}{\mddefault}{\updefault}5}}}}
\put(2865,-310){\makebox(0,0)[lb]{\smash{{\SetFigFont{8}{9.6}{\rmdefault}{\mddefault}{\updefault}2}}}}
\put(2878,120){\makebox(0,0)[lb]{\smash{{\SetFigFont{8}{9.6}{\rmdefault}{\mddefault}{\updefault}0
}}}}
\put(3393,124){\makebox(0,0)[lb]{\smash{{\SetFigFont{8}{9.6}{\rmdefault}{\mddefault}{\updefault}1}}}}
\put(3384,-315){\makebox(0,0)[lb]{\smash{{\SetFigFont{8}{9.6}{\rmdefault}{\mddefault}{\updefault}4}}}}
\put(3121,-509){\makebox(0,0)[lb]{\smash{{\SetFigFont{8}{9.6}{\rmdefault}{\mddefault}{\updefault}2}}}}
\put(3121,293){\makebox(0,0)[lb]{\smash{{\SetFigFont{8}{9.6}{\rmdefault}{\mddefault}{\updefault}3}}}}
\put(2589,-102){\makebox(0,0)[lb]{\smash{{\SetFigFont{8}{9.6}{\rmdefault}{\mddefault}{\updefault}4}}}}
\put(3211,-88){\makebox(0,0)[lb]{\smash{{\SetFigFont{8}{9.6}{\rmdefault}{\mddefault}{\updefault}3}}}}
\end{picture}%}\hfil 
\subfigure[\label{flow-exB}]{\setlength{\unitlength}{3947sp}%
\begingroup\makeatletter\ifx\SetFigFont\undefined%
\gdef\SetFigFont#1#2#3#4#5{%
  \reset@font\fontsize{#1}{#2pt}%
  \fontfamily{#3}\fontseries{#4}\fontshape{#5}%
  \selectfont}%
\fi\endgroup%
\begin{picture}(1095,922)(4076,-521)
\thicklines
\put(4651,-361){\line( 0, 1){600}}
\put(4201,-61){\line( 1, 0){900}}
\put(4653,242){\circle*{60}}
\put(5103,-58){\circle*{60}}
\put(4651,-61){\circle*{60}}
\put(4651,239){\vector( 0,-1){225}}
\put(4201,-61){\vector( 1, 0){300}}
\put(4651,-61){\vector( 1, 0){300}}
\put(4651,-61){\vector( 0,-1){225}}
\put(4203,-58){\circle*{60}}
\put(4653,-358){\circle*{60}}
\put(4540, 74){\makebox(0,0)[lb]{\smash{{\SetFigFont{8}{9.6}{\rmdefault}{\mddefault}{\updefault}4}}}}
\put(4623,296){\makebox(0,0)[lb]{\smash{{\SetFigFont{8}{9.6}{\rmdefault}{\mddefault}{\updefault}3}}}}
\put(4091,-99){\makebox(0,0)[lb]{\smash{{\SetFigFont{8}{9.6}{\rmdefault}{\mddefault}{\updefault}4}}}}
\put(5156,-99){\makebox(0,0)[lb]{\smash{{\SetFigFont{8}{9.6}{\rmdefault}{\mddefault}{\updefault}5}}}}
\put(4675,-24){\makebox(0,0)[lb]{\smash{{\SetFigFont{8}{9.6}{\rmdefault}{\mddefault}{\updefault}1}}}}
\put(4877,-33){\makebox(0,0)[lb]{\smash{{\SetFigFont{8}{9.6}{\rmdefault}{\mddefault}{\updefault}5}}}}
\put(4540,-231){\makebox(0,0)[lb]{\smash{{\SetFigFont{8}{9.6}{\rmdefault}{\mddefault}{\updefault}1}}}}
\put(4415,-23){\makebox(0,0)[lb]{\smash{{\SetFigFont{8}{9.6}{\rmdefault}{\mddefault}{\updefault}2}}}}
\put(4623,-506){\makebox(0,0)[lb]{\smash{{\SetFigFont{8}{9.6}{\rmdefault}{\mddefault}{\updefault}2}}}}
\end{picture}%}\hfil 
\subfigure[\label{flow-exC}]{\setlength{\unitlength}{3947sp}%
\begingroup\makeatletter\ifx\SetFigFont\undefined%
\gdef\SetFigFont#1#2#3#4#5{%
  \reset@font\fontsize{#1}{#2pt}%
  \fontfamily{#3}\fontseries{#4}\fontshape{#5}%
  \selectfont}%
\fi\endgroup%
\begin{picture}(1095,922)(5836,-524)
\thicklines
\put(6420,239){\line( 3,-2){450}}
\put(6870,-61){\line(-3,-2){450}}
\put(6420,-361){\line(-3, 2){450}}
\put(5970,-61){\line( 3, 2){450}}
\put(6413,-361){\circle*{60}}
\put(6413,239){\circle*{60}}
\put(6863,-61){\circle*{60}}
\put(6420,239){\line( 0,-1){600}}
\put(6420,-361){\vector( 0, 1){375}}
\put(6870,-61){\vector(-3,-2){225}}
\put(6420,-361){\vector(-3, 2){225}}
\put(5970,-61){\vector( 3, 2){225}}
\put(6870,-61){\vector(-3, 2){225}}
\put(5963,-61){\circle*{60}}
\put(6916,-102){\makebox(0,0)[lb]{\smash{{\SetFigFont{8}{9.6}{\rmdefault}{\mddefault}{\updefault}5}}}}
\put(6127,-310){\makebox(0,0)[lb]{\smash{{\SetFigFont{8}{9.6}{\rmdefault}{\mddefault}{\updefault}3}}}}
\put(6140,120){\makebox(0,0)[lb]{\smash{{\SetFigFont{8}{9.6}{\rmdefault}{\mddefault}{\updefault}1
}}}}
\put(6655,124){\makebox(0,0)[lb]{\smash{{\SetFigFont{8}{9.6}{\rmdefault}{\mddefault}{\updefault}2}}}}
\put(6646,-315){\makebox(0,0)[lb]{\smash{{\SetFigFont{8}{9.6}{\rmdefault}{\mddefault}{\updefault}3}}}}
\put(6383,-509){\makebox(0,0)[lb]{\smash{{\SetFigFont{8}{9.6}{\rmdefault}{\mddefault}{\updefault}2}}}}
\put(6383,293){\makebox(0,0)[lb]{\smash{{\SetFigFont{8}{9.6}{\rmdefault}{\mddefault}{\updefault}3}}}}
\put(5851,-102){\makebox(0,0)[lb]{\smash{{\SetFigFont{8}{9.6}{\rmdefault}{\mddefault}{\updefault}4}}}}
\put(6473,-88){\makebox(0,0)[lb]{\smash{{\SetFigFont{8}{9.6}{\rmdefault}{\mddefault}{\updefault}1}}}}
\end{picture}%}\hfil 
\subfigure[\label{flow-exD}]{\setlength{\unitlength}{3947sp}%
\begingroup\makeatletter\ifx\SetFigFont\undefined%
\gdef\SetFigFont#1#2#3#4#5{%
  \reset@font\fontsize{#1}{#2pt}%
  \fontfamily{#3}\fontseries{#4}\fontshape{#5}%
  \selectfont}%
\fi\endgroup%
\begin{picture}(1095,922)(7486,-524)
\thicklines
\put(8070,239){\line( 3,-2){450}}
\put(8520,-61){\line(-3,-2){450}}
\put(8070,-361){\line(-3, 2){450}}
\put(7620,-61){\line( 3, 2){450}}
\put(8063,-361){\circle*{60}}
\put(8063,239){\circle*{60}}
\put(8513,-61){\circle*{60}}
\put(8070,239){\line( 0,-1){600}}
\put(8070,-361){\vector( 0, 1){375}}
\put(8520,-61){\vector(-3,-2){225}}
\put(8070,-361){\vector(-3, 2){225}}
\put(7620,-61){\vector( 3, 2){225}}
\put(8520,-61){\vector(-3, 2){225}}
\put(7613,-61){\circle*{60}}
\put(8566,-102){\makebox(0,0)[lb]{\smash{{\SetFigFont{8}{9.6}{\rmdefault}{\mddefault}{\updefault}5}}}}
\put(7777,-310){\makebox(0,0)[lb]{\smash{{\SetFigFont{8}{9.6}{\rmdefault}{\mddefault}{\updefault}1}}}}
\put(7790,120){\makebox(0,0)[lb]{\smash{{\SetFigFont{8}{9.6}{\rmdefault}{\mddefault}{\updefault}0}}}}
\put(8305,124){\makebox(0,0)[lb]{\smash{{\SetFigFont{8}{9.6}{\rmdefault}{\mddefault}{\updefault}2}}}}
\put(8296,-315){\makebox(0,0)[lb]{\smash{{\SetFigFont{8}{9.6}{\rmdefault}{\mddefault}{\updefault}3}}}}
\put(8033,-509){\makebox(0,0)[lb]{\smash{{\SetFigFont{8}{9.6}{\rmdefault}{\mddefault}{\updefault}2}}}}
\put(8033,293){\makebox(0,0)[lb]{\smash{{\SetFigFont{8}{9.6}{\rmdefault}{\mddefault}{\updefault}3}}}}
\put(7501,-102){\makebox(0,0)[lb]{\smash{{\SetFigFont{8}{9.6}{\rmdefault}{\mddefault}{\updefault}4}}}}
\put(8123,-88){\makebox(0,0)[lb]{\smash{{\SetFigFont{8}{9.6}{\rmdefault}{\mddefault}{\updefault}2}}}}
\end{picture}%}
\caption{Conservative 1-flows and firings}\label{flow-ex}
\end{center}
\end{figure}

\end{example}

\section{Critical Groups as Chow Groups}
\label{sec:intersection}

An area for further research is to interpret the higher-dimensional
critical groups of a simplicial complex~$\Delta$ as simplicial
analogues of the Chow groups of an algebraic variety.  (For the
algebraic geometry background, see, e.g.,
\cite[Appendix~A]{Hartshorne} or \cite{Fulton}.)  We regard~$\Delta$
as the discrete analogue of a $d$-dimensional variety, so that
divisors correspond to formal sums of codimension-1 faces.  Even more
generally, algebraic cycles of dimension~$i$ correspond to simplicial
$i$-chains.  The critical group $K_i(\Delta)$ consists of closed
$i$-chains modulo conservative flows (in the language of
Section~\ref{sec:model}) is thus analogous to the Chow group of
algebraic cycles modulo rational equivalence.  This point of view has
proved fruitful in the case of graphs \cite{BHN,BN,HMY,Lor1,Lor2}.  In
order to develop this analogy fully, the next step
is to define a ring structure on
$\bigoplus_{i\geq 0}K_i(\Delta)$ with a ring structure analogous to that of
the Chow ring.  The goal is to define a ``critical ring'' whose
multiplication encodes a simplicial version of
intersection theory on~$\Delta$.

\bibliographystyle{alpha}

\end{document}